\providecommand{\U}[1]{\protect\rule{.1in}{.1in}}
\newtheorem{theorem}{Theorem}[section]
\newtheorem{corollary}[theorem]{Corollary}
\newtheorem{example}[theorem]{Example}
\newtheorem{remark}[theorem]{Remark}
\newtheorem{definition}[theorem]{Definition}
\begin{document}

%
%

\title{ Maximal spaceability in topological vector spaces}
\author{G. Botelho\thanks{Supported by CNPq Grant
306981/2008-4, Fapemig Grant PPM-00295-11 and INCT-Matem\'atica.}\,\,,~D. Cariello,  V. V.
F\'{a}varo\thanks{Supported by FAPEMIG Grant CEX-APQ-00208-09.}~~and D. Pellegrino\thanks{Supported by INCT-Matem\'{a}tica, CNPq Grant 301237/2009-3. \newline 2010 Mathematics Subject Classification: 46B45, 46A16, 46A45.\newline Keywords: spaceability, lineability, topological vector spaces, sequence spaces}}

\date{}
\maketitle

\begin{abstract}
In this paper we introduce a new technique to prove the existence of closed subspaces of maximal dimension inside sets of topological vector sequence spaces. The results we prove cover some sequence spaces not studied before in the context of spaceability and settle some questions on classical sequence spaces that remained open.
\end{abstract}

\section{Introduction}\label{sec1}
Lineability/spaceability, a recent trend in mathematics, is the search for (closed) infinite dimensional linear subspaces inside certain sets of topological vector spaces that fail to have a linear structure. By maximal spaceability we mean the search for closed subspaces of maximal dimension. More precisely, a subset $A$ of a topological vector space $E$ is said to be:

\medskip

$\bullet$ {\it $\mu$-spaceable}, where $\mu$ is a cardinal number, if $A \cup \{0\}$ contains a closed $\mu$-dimensional subspace of $E$,

\medskip

$\bullet$ {\it Maximal spaceable} if it is $\dim E$-spaceable.

\medskip

Many interesting results in the field have been discovered lately, mainly concerning lineability/spaceability in functions spaces \cite{aron, aron-seoane, aron-gurariy, bayart, bernal, garcia-grecu, garcia-seoane, seoane, gurariy-quarta, seoanelaa}, sequence spaces \cite{aron, arondomingo, cleon, laa, timoney}, spaces of linear operators/homogeneous polynomials \cite{bdp, bmp, timoney, pt, ps}, and even spaces of holomorphic functions \cite{lc}. Nevertheless, maximal spaceability is rarely obtained. Almost all of the known results achieve $\mathfrak{c}$-spaceability at most. The aim of this paper is to introduce a maximal spaceability technique for topological vector sequence spaces.

Given an infinite dimensional Banach space $X$, the technique we introduce gives maximal spaceability in topological vector spaces formed by $X$-valued sequences. Very little is known about maximal spaceability in this context, and even some natural $\mathfrak{c}$-spaceability questions remain open. For example, denoting by $\ell_p^w(X)$ the Banach ($p$-Banach if $0 < p < 1$) space of weakly $p$-summable $X$-valued sequences, it is unknown if $\ell_p^w(X) - \ell_q^w(X)$, $0 < q < p < \infty$, is $\mathfrak{c}$-spaceable. In Section \ref{sec2} we prove a quite general maximal spaceabilty result (cf. Theorem \ref{theo}) in the setting of a broad class of topological vector sequence spaces, which includes metrizable and locally convex spaces formed by vector-valued sequences. Particularizing to concrete spaces, we settle several questions on classical sequences spaces that remained open due to the inadequacy of the known previous techniques, for example, we prove (cf. Corollary \ref{cor}) that $\ell_p^w(X) - \ell_q^w(X)$, and even $\ell_p(X) - \ell_q^w(X)$, are actually maximal spaceable. Moreover, we obtain information about sequence spaces that had not been studied before in the context of spaceability, such as $Rad(X)$ and $\ell_p\langle X \rangle$ (cf. Corollaries \ref{corcohen} and \ref{corrad}), both of them important tools in the geometry of Banach spaces.

In order to get information about so many different spaces as particular cases of one single result, an abstract framework -- called {\it sequence functors} (cf. Definition \ref{def}) -- had to be constructed. Although a bit involved at first glance, we believe the richness of examples and the applicability of the main result justify the introduction of such an abstract concept.

In a final remark (cf. Remark \ref{remark}) we point out that the spaceability technique we introduce is not restricted to sequence spaces.

Let $\mathbb{K}=\mathbb{R}$ or $\mathbb{C}$. Henceforth all linear spaces are over $\mathbb{K}$.

\section{Sequence functors}\label{sec2}

Given sets $A_1, A_2$, consider the projections
$$\pi_j \colon A_1 \times A_2 \longrightarrow A_j ~,~\pi_j(a_1, a_2) = a_j~, ~j = 1,2.$$
 Given a linear space $X$, a vector $x \in X$ and a scalar sequence $a =(a_n)_{n=1}^\infty \in \mathbb{K}^{\mathbb{N}}$, consider the $X$-valued sequence
 $$a \otimes x := (a_nx)_{n=1}^\infty. $$
 By $\mathcal{B}$ we mean the class of all Banach spaces over $\mathbb{K}$, by $\mathcal{TVS}$ the class of all topological vector spaces over $\mathbb{K}$ and by ${\cal MSN}$ the class of all metrics and family of seminorms that induce the topology of some space belonging to $\cal TVS$.

 \begin{definition}\rm \label{def}A {\it sequence functor} is a correspondence
$$E \colon \Gamma_E \times {\cal B} \longrightarrow {\mathcal TVS} \times {\cal MSN}, $$
where $\Gamma_E$ is a non-empty set, such that:\\
(a) One of the two following possibilities occurs:\\
(a1) For all $p \in \Gamma_E$ and $X \in {\cal B}$, $\pi_1(E(p,X))$ is a metrizable topological vector space and, in this case, $\pi_2(E(p,X))$ is a translation invariant metric, denoted by $d_{E(p,X)}$, that induces the linear topology of $\pi_1(E(p,X))$; or\\
(a2) For all $p \in \Gamma_E$ and $X \in {\cal B}$, $\pi_1(E(p,X))$ is a locally convex space and, in this case, $\pi_2(E(p,X))$ is a family of seminorms that induces the topology of $\pi_1(E(p,X))$.\\
\indent For simplicty -- and unambiguously -- from now on we shall write, in both cases, $E(p,X)$ for the space, that is, $E(p,X) := \pi_1(E(p,X))$.\\
(b) For all $p \in \Gamma_E$ and $X \in {\cal B}$, $E(p,X)$ is a linear subspace of $ X^{\mathbb{N}}$ with the usual coordinatewise sequence algebraic operations and, given $a \in \mathbb{K}^\mathbb{N}$ and $x \in X$, $x \neq 0$,
$$a \otimes x \in E(p,X) {\rm ~if~and~only~if~} a \in E(p, \mathbb{K}). $$
(c) Let $p \in \Gamma_E$ and an infinite dimensional $X \in {\cal B}$ be given.\\
(c1) If (a1) occurs, there are constants $L_{X,p},M_{X,p} >0$ such that
 $$L_{X,p} \cdot \|x\| \cdot d_{E(p,\mathbb{K})}(a,0) \leq d_{E(p,X)}(a \otimes x,0) \leq M_{X,p} \cdot \|x\| \cdot d_{E(p,\mathbb{K})}(a,0),$$
for all $a \in E(p,\mathbb{K})$ and $x \in X$.\\
(c2) If (a2) occurs, there are a constant  $K_{X,p} > 0$ and a continuous seminorm $\alpha_{E(p,X)}\in \pi_2(E(p,X))$ such that
$$\|w_j\|_X \leq K_{X,p} \cdot \alpha_{E(p,X)}\left(w\right),$$ for all $w = (w_n)_{n=1}^\infty \in E(p,X)$ and $j \in \mathbb{N}$.

When $\Gamma_E$ is a singleton, we shall simply write $E \colon {\cal B} \longrightarrow {\cal TVS} \times {\cal MSN}$, $X \in {\cal B} \mapsto E(X)$.
\end{definition}

\begin{example}\label{exam}\rm (a) The correspondence
$$  X \in {\cal B} \mapsto E(X) = (c_0(X), \|\cdot\|_\infty),$$
where $c_0(X)$ is the Banach space of all norm-null $X$-valued sequences endowed with the sup norm $\|\cdot\|_\infty$, is a sequence functor.

\medskip

\noindent (b) Given a Banach space $X$, let $B_{X}$ denote its closed unit ball and $ X'$ its topological dual. If $0 < p < \infty$, let:\\
 $\bullet$ $\ell_p(X) $ be the Banach ($p$-Banach if $0 < p < \infty$) space of all $X$-valued absolutely $p$-summable sequences $(x_n)_{n=1}^\infty$ (see \cite[Section 8]{df}) endowed with the norm ($p$-norm if $0 < p < 1$)
 $$\|(x_n)_{n=1}^\infty\|_p = \left(\sum_{n=1}^\infty \|x_n\|_X^p \right)^{1/p}. $$
$\bullet$ $\ell_p^w(X) $ be the Banach ($p$-Banach if $0 < p < \infty$) space of all $X$-valued weakly $p$-summable sequences $(x_n)_{n=1}^\infty$, that is, $(\varphi(x_n))_{n=1}^\infty \in \ell_p$ for every $\varphi \in X'$ (see \cite[Section 8]{df}), endowed with the norm ($p$-norm if $0 < p < 1$)
 $$\|(x_n)_{n=1}^\infty\|_{w,p} = \sup_{\varphi \in B_{X'}}\left(\sum_{n=1}^\infty |\varphi(x_n)|^p \right)^{1/p}. $$
 $\bullet$ $\ell_p^u(X) $ be the closed subspace of $\ell_p^w(X) $ formed by all unconditionally $p$-summable sequences $(x_n)_{n=1}^\infty$, that is, $\lim_{k \rightarrow \infty}\|(x_n)_{n=k}^\infty\|_{w,p} = 0$ (see \cite[8.2]{df}).

Since the topologies of Banach and quasi-Banach spaces are induced by translation invariant metrics, the following correspondences are sequence functors:
$$(p,X) \in (0,\infty) \times {\cal B} \mapsto  \left\{\begin{array}{cl} & \hspace*{-1em}E(p,X) = (\ell_p(X), \|\cdot\|_p), \\ & \hspace*{-1em}F(p,X) = (\ell_p^w(X), \|\cdot\|_{w,p}),\\&\hspace*{-1em} G(p,X) = (\ell_p^u(X), \|\cdot\|_{w,p}).  \end{array} \right.$$

\noindent (c) For $1 \leq p < \infty$ and a Banach space $X$, let $\ell_p\langle X \rangle$ be the Banach space of all Cohen-$p$-summable $X$-valued sequences $(x_n)_{n=1}^\infty$ (see \cite{Arregui-Blasco, Bu, Bu-Diestel, Cohen}), that is,
$$\|(x_j)_{j=1}^\infty\|_{\ell_p\langle X\rangle} := \sup\left\{\sum_{j=1}^\infty|y^*_j(x_j)| :
\|(y_j^*)_{j=1}^\infty\|_{\ell^w_{p'}(X')}=1 \right\}<\infty.$$
The correspondence
$$(p,X) \in [1, \infty) \times {\cal B} \mapsto E(p,X) = (\ell_p\langle X \rangle,\|\cdot\|_{\ell_p\langle X\rangle}), $$
is a sequence functor. From \cite[Theorem 1]{Arregui-Blasco}, we have that $\ell_p\langle \mathbb{K} \rangle = \ell_p$, so condition \ref{def}(c1) follows from \cite[Lemma 1]{Arregui-Blasco}.

\medskip

\noindent (d) Given a Banach space $X$, by $Rad(X)$ we mean the Banach space of all almost unconditionally summable $X$-valued sequences $(x_n)_{n=1}^\infty$ (see \cite[p.\,233]{DJT}) endowed with the norm
$$\|(x_n)_{n=1}^\infty\|_{Rad(X)} = \left(\int_0^1 \left\|\sum_{n=1}^\infty r_n(t)x_n \right\|_X^2dt \right)^{1/2}, $$
where $(r_n)_{n=1}^\infty$ are the Rademacher functions. The correspondence
$$X \in  {\cal B} \mapsto E(X) = (Rad(X), \| \cdot \|_{Rad(X)}), $$
is a sequence functor. Khintchine's inequalities (see \cite[1.10]{DJT}) shows that $Rad(\mathbb{K}) = \ell_2$ as sets and that the norm $\|\cdot\|_{Rad(\mathbb{K})}$ is equivalent to the usual $\ell_2$-norm.

\medskip

Now we give two examples where $E(p,\mathbb{K}) \neq \ell_p$; $E(p,\mathbb{K})$ being a non-normed locally convex space in one of them and non-locally convex in the other one.

\medskip

\noindent (e) Given $1 \leq p < \infty$ and a Banach space $X$, consider
$$\ell_p^+(X):= \textstyle\bigcap\limits_{q > p}\ell_q(X),$$
endowed with the locally convex topology generated by the family of norms
$\left(\|\cdot \|_q\right)_{q >p}$. 
 Actually $\ell_p^+(X)$ is a Fr\'echet space. Condition \ref{def}(c2) is easily checked, so the correspondence
$$(p,X) \in [1, \infty) \times {\cal B} \mapsto E(p,X) = \left(\ell_p^+(X), (\|\cdot \|_{q})_{q > p},\right), $$
is a sequence functor. The scalar component $\ell_p^+(\mathbb{K})$ is the Metafune--Moscatelli space $l^{p+}$ \cite{metafune}.

\medskip

\noindent (f) (Lorentz spaces) Let $X$ be a Banach space. For $x=(x_{j})_{j=1}^{\infty}\in\ell_{\infty}(X)$, define
\[
{\mathfrak a}_{X,n}(x):=\inf\left\{  \left\Vert x-u\right\Vert _{\infty}: u\in
c_{00}(X)\text{ and card}(u)<n\right\},
\]
where $c_{00}(X)$ is the space of eventually null $X$-valued sequences, and  card$(u)$ denotes the
cardinality of the set $\{j:u_{j}\neq0\}$ where $u=(u_{j})_{j=1}^\infty\in c_{00}(X)$.

For $0<r,q<+\infty$, the Lorentz sequence space $\ell_{(r,q)}(X)$ (see \cite{mariodaniel}, for the scalar case see \cite{ideals, livropietsch}) consists of all sequences
$x=(x_{j})_{j=1}^{\infty}\in\ell_{\infty}(X)$ such that%
\[
\left(  n^{\frac{1}{r}-\frac{1}{q}}{\mathfrak a}_{X,n}(x)\right)  _{n=1}^{\infty}\in
\ell_{q},
\]
which becomes a complete quasi-normed space with the quasi-norm
\[
\left\Vert x\right\Vert _{\ell_{(r,q)}(X)}=\left\Vert \left(  n^{\frac{1}{r}-\frac{1}%
{q}}{\mathfrak a}_{X,n}(x)\right)  _{n=1}^{\infty}\right\Vert _{q}.
\]
It is well known that $\ell_{(r,q)}(X)\subseteq c_{0}(X)$ and it is also easy to
prove that if $x=(x_{j})_{j=1}^{\infty}\in c_{0}(X),$ then $x$ admits a
non-increasing rearrangement, that is, there is an injection $\pi \colon \mathbb{N}%
\longrightarrow\mathbb{N}$ such that $x_{\pi(1)}\geq x_{\pi(2)}\geq\cdots$ (see \cite[Theorem 13.6.]{ideals}). It follows that ${\mathfrak a}_{X,n}(x) = \|x_{\pi(n)}\|$ for every $n$. Therefore,
$$\|a \otimes y\|_{\ell_{(r,q)}(X)} = \|y\|_X \cdot \|a\|_{\ell_{(r,q)}(\mathbb{K})},  $$
for all $a \in \ell_{(r,q)}(\mathbb{K})$ and $y \in X$.
According to these remarks, the correspondence
$$((r,q),X) \in [(0, \infty)\times (0, \infty)] \times {\cal B} \mapsto E((r,q),X) = \left(\ell_{(r,q)}(X), \|\cdot \|_{\ell_{(r,q)}(X)}\right), $$
is a sequence functor.
\end{example}

\section{Results}\label{sec2}

For a Banach space $X$, given subsets $A$ and $B$ of $X^\mathbb{N}$, by $A - B$ we mean the set $A \cap (X^{\mathbb{N}} - B)$. Let $A$ be a subset of a topological vector space $E$. Unless otherwise explicitly stated, saying that $E - A$ is $\mu$-spaceable means that it is $\mu$-spaceable in $E$. Now we prove our main result:

\begin{theorem}\label{theo} Let $E,F$ be sequence functors and $X$ be an infinite dimensional Banach space. If $\bigcap\limits_{p \in \Gamma_E}E(p,\mathbb{K}) - \bigcup\limits_{q \in \Gamma_F}F(q,\mathbb{K}) \neq \emptyset $, then
$\bigcap\limits_{p \in \Gamma_E} E(p,X) - \bigcup\limits_{q \in \Gamma_F}F(q,X)$
is maximal spaceable in each $E(p,X)$, $p \in \Gamma_E$.
\end{theorem}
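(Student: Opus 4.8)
The plan is to show that the usual block/diagonal construction is unnecessary here: a single ``generalized column'' built from one witness sequence already realizes $\dim E(p,X)$. First, using the hypothesis, fix $a=(a_n)_{n=1}^\infty\in\bigcap_{p\in\Gamma_E}E(p,\mathbb{K})$ with $a\notin F(q,\mathbb{K})$ for every $q\in\Gamma_F$; since $\Gamma_F\neq\emptyset$ and $0\in F(q,\mathbb{K})$, this forces $a\neq0$, so $a_{n_0}\neq0$ for some $n_0$. Fix $p_0\in\Gamma_E$ and set $V:=a\otimes X=\{a\otimes x:x\in X\}$. Because $a\in E(p_0,\mathbb{K})$, Definition \ref{def}(b) gives $a\otimes x\in E(p_0,X)$ for every $x\in X$, so $V$ is a linear subspace of $E(p_0,X)$; the map $x\mapsto a\otimes x$ is linear and, because $a_{n_0}\neq0$, injective, whence $\dim V=\dim X$.

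Next I would check that $V\setminus\{0\}$ lies in the target set. Let $x\neq0$. For every $p\in\Gamma_E$ we have $a\in E(p,\mathbb{K})$, so $a\otimes x\in E(p,X)$ by \ref{def}(b); and for every $q\in\Gamma_F$ we have $a\notin F(q,\mathbb{K})$, so $a\otimes x\notin F(q,X)$ by the same condition applied to the sequence functor $F$. Hence every nonzero element of $V$ belongs to $\bigcap_{p\in\Gamma_E}E(p,X)$ and avoids $\bigcup_{q\in\Gamma_F}F(q,X)$; that is, $V\subseteq\bigl(\bigcap_{p\in\Gamma_E}E(p,X)-\bigcup_{q\in\Gamma_F}F(q,X)\bigr)\cup\{0\}$.

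It remains to identify $\dim V$ with $\dim E(p_0,X)$ and to show that $V$ is closed in $E(p_0,X)$. For the dimension, I would invoke the standard cardinal identity $|X|^{\aleph_0}=|X|$, valid for every infinite-dimensional Banach space (it follows from $|X|=\mathrm{dens}(X)^{\aleph_0}$), together with $\dim X=|X|$; this yields $\dim X=\dim X^{\mathbb{N}}$, and since $V\subseteq E(p_0,X)\subseteq X^{\mathbb{N}}$ with $\dim V=\dim X$, all three dimensions coincide. For closedness I would split according to Definition \ref{def}(a). In case (a1): by translation invariance of $d_{E(p_0,X)}$ and \ref{def}(c1), $x\mapsto a\otimes x$ is a bi-Lipschitz map of $X$ onto $V$ with constants $L_{X,p_0}\,d_{E(p_0,\mathbb{K})}(a,0)>0$ and $M_{X,p_0}\,d_{E(p_0,\mathbb{K})}(a,0)<\infty$; as $X$ is complete, $V$ is a complete, hence closed, subspace. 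In case (a2): given $w$ in the closure of $V$ and a net $a\otimes x_\delta\to w$, the estimate \ref{def}(c2) at coordinate $n_0$ gives $\|x_\delta-x_{\delta'}\|_X\le K_{X,p_0}|a_{n_0}|^{-1}\alpha_{E(p_0,X)}\!\bigl(a\otimes(x_\delta-x_{\delta'})\bigr)$, so $(x_\delta)$ is Cauchy and converges to some $x\in X$; since the coordinate maps on $E(p_0,X)$ are continuous (again by \ref{def}(c2)), comparing the coordinatewise limits forces $w=a\otimes x\in V$.

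Putting this together, $V$ is a closed subspace of $E(p_0,X)$ of dimension $\dim E(p_0,X)$ contained in $\bigl(\bigcap_{p\in\Gamma_E}E(p,X)-\bigcup_{q\in\Gamma_F}F(q,X)\bigr)\cup\{0\}$, which is exactly maximal spaceability in $E(p_0,X)$; since $p_0\in\Gamma_E$ was arbitrary, the theorem follows. I expect the only genuinely delicate point to be the closedness of $a\otimes X$ in the non-metrizable case (a2): there one cannot appeal to completeness of the ambient space as in (a1), and the limit must be rebuilt coordinate by coordinate from the single continuous seminorm supplied by \ref{def}(c2). The cardinal arithmetic identity $\dim X^{\mathbb{N}}=\dim X$, although essential for the word ``maximal'', is classical.
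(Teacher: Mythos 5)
Your proof is correct and follows essentially the same route as the paper: the one-dimensional ``column'' $a\otimes X$ generated by a single witness $a$, condition (b) for membership, the cardinal identity $\dim X^{\mathbb{N}}=\dim X$ for maximality, and conditions (c1)/(c2) for closedness. The only (cosmetic) deviation is in case (a2), where you deduce that $(x_\delta)$ is a Cauchy net in $X$ from the coordinate $n_0$ with $a_{n_0}\neq 0$ and invoke completeness, whereas the paper instead reads off the limit directly from a coordinate $r$ with $w_r\neq 0$; both arguments are valid, and you also supply the justification of $\dim X^{\mathbb{N}}=\dim X$ that the paper leaves implicit.
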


\begin{proof}
Let $a = (a_{n})_{n=1}^{\infty}\in\bigcap\limits_{p \in \Gamma_E}E(p,\mathbb{K}) - \bigcup\limits_{q \in \Gamma_F}F(q,\mathbb{K}) $ and observe that $a \neq 0$. Let $p \in \Gamma_E$. As $E(p,X)$ is a sequence functor, the operator
$$T \colon X \longrightarrow E(p,X)~,~T(x) = a \otimes x, $$
is well defined. Its linearity is clear. Let us see that $T$ is injective. Given $x \in X$ with $$0 = T(x) = a \otimes x = (a_nx)_{n=1}^\infty,$$ from the fact that $a_n \neq 0$ for some $n \in \mathbb{N}$ it follows that $x = 0$. So the range of $T$, denoted by $U  = \{a \otimes x : x\in E\}$, is a $\dim X$-dimensional subspace of $E(p,X)$.
%
%
 We just have to show that:\\
\indent (i) $U-\left\{  0\right\}  \subseteq \textstyle\bigcap\limits_{r \in \Gamma_E} E(r,X) - \bigcup\limits_{q \in \Gamma_F}F(q,X)$,\\
\indent (ii) $\dim U=\dim E(p,X)$, \\
\indent (iii) $U$ is a closed subspace of $E(p,X)$.\\ 
(i) Let $u \in U - \{0\}$. Then $u = a \otimes x$ for some $0 \neq x \in X$. Since $a \in\bigcap\limits_{r \in \Gamma_E}E(r,\mathbb{K}) - \bigcup\limits_{q \in \Gamma_F}F(q,\mathbb{K}) $ and $E,F$ are sequence functors, by condition \ref{def}(b) it follows that $u= a \otimes x \in \textstyle\bigcap\limits_{r \in \Gamma_E} E(r,X) - \bigcup\limits_{q \in \Gamma_F}F(q,X)$.   

\noindent(ii) 
We already know that $\dim U =  \dim X$. On the other hand, since $X$ is an infinite dimensional Banach space and $U \subseteq E(p,X) \subseteq X^{\mathbb{N}}$,
$$\dim X = \dim U \leq \dim E(p,X) \leq \dim X^{\mathbb{N}} = \dim X, $$
proving (ii). \\
(iii) Let $w$ belong to the closure of $U$ in $E(p,X)$. Assume first that \ref{def}(a1) occurs. In this metrizable case there is a sequence $ \left(  a \otimes x_k \right)
_{k=1}^{\infty}\subseteq U$ such that $\lim_{k\rightarrow\infty}a \otimes x_k=w$ in $E(p,X).$ In particular, $\left(  a \otimes x_k \right)_{k=1}^{\infty}$ is a Cauchy sequence in $E(p,X)$. As $L_{X,p}, d_{E(p,\mathbb{K})}(a,0)>0$ and the metric $d_{E(p,X)}$ is translation invariant, condition \ref{def}(c1) gives
\begin{align*}\|x_k - x_m\|_X &\leq \frac{1}{L_{X,p} \cdot d_{E(p,\mathbb{K})}(a,0) }\cdot d_{E(p,X)}(a \otimes (x_k - x_m),0) \\
& = \frac{1}{L_{X,p} \cdot d_{E(p,\mathbb{K})}(a,0) }\cdot d_{E(p,X)}(a \otimes x_k - a \otimes x_m,0)\\
& = \frac{1}{L_{X,p} \cdot d_{E(p,\mathbb{K})}(a,0) }\cdot d_{E(p,X)}(a \otimes x_k,  a \otimes x_m) \stackrel{k,m \rightarrow \infty}{\longrightarrow} 0,
\end{align*}
which shows that $(x_k)_{k=1}^\infty$ is a Cauchy sequence in $X$, hence convergent. Say $\lim_{k \rightarrow \infty}x_k = x \in X $. From
\begin{align*}d_{E(p,X)}(a \otimes x_k - a \otimes x,0) &= d_{E(p,X)}(a \otimes (x_k - x),0)\\& \leq M_{X,p} \cdot \|x_k - x\|_X\cdot d_{E(p,\mathbb{K})}(a,0) \stackrel{k \rightarrow \infty}{\longrightarrow}0,
\end{align*}
we conclude that $\lim_{k \rightarrow \infty}(a \otimes x_k - a \otimes x) =0$ in $E(p,X)$. Since the topology on $E(p,X)$ is linear, we have that $\lim_{k \rightarrow \infty}a \otimes x_k = a \otimes x$. It follows that $w = a \otimes x \in U$.

Assume now that \ref{def}(a2) occurs. There is nothing to do if $w =0$, so assume that  $w\neq0$. There are a directed set $\Lambda$ and a net $ \left(  a \otimes x_\lambda \right)
_{\lambda \in \Lambda}\subseteq U$ such that $\lim_{\lambda \in \Lambda}a \otimes x_\lambda=w$ in $E(p,X).$ 
Let $K_{X,p}$ and $\alpha_{E(p,X)}$ be as in condition \ref{def}(c2). Then $\lim_{\lambda \in \Lambda}\alpha_{E(p,X)}(a \otimes x_\lambda-w) =0$. Putting $w = (w_n)_{n=1}^\infty$ and using that the algebraic operations in $E(p,X)$ are the usual coordinatiwise sequence operations,
condition \ref{def}(c2) gives
\begin{align*}\|a_j x_\lambda - w_j\|_X &\leq  K_{X,p} \cdot \alpha_{E(p,X)}\left((a_nx_\lambda - w_n)_{n=1}^\infty\right) \\&=  K_{X,p} \cdot \alpha_{E(p,X)}\left((a_nx_\lambda)_{n=1}^\infty - (w_n)_{n=1}^\infty\right)\\
& =  K_{X,p} \cdot \alpha_{E(p,X)}\left(a \otimes x_\lambda- w\right) \stackrel{\lambda \in \Lambda}{\longrightarrow} 0,
\end{align*}
for every $j \in \mathbb{N}$. So,
\begin{equation}\label{eq1}\lim_{\lambda \in \Lambda}a_{j}x_{\lambda}=w_{j} {\rm ~in~} X {\rm ~ for~ every~} j\in\mathbb{N}.
\end{equation} Since $w\neq0,$
there is $r\in\mathbb{N}$ such that $w_{r}\neq0$. From $\lim_{\lambda \in \Lambda}a_{r}x_{\lambda}=w_{r}\neq 0$ it follows that $a_{r}\neq0$ and $w_{r}=a_{r}\cdot
\lim_{\lambda \in \Lambda}x_{\lambda}$. Calling $x:= \frac{w_r}{a_r}= \lim_{\lambda \in \Lambda}x_{k}\in
X$, from (\ref{eq1}) it follows that $w_{n}=a_{n} x$ for every $n\in\mathbb{N}$. Hence $w = a \otimes x\in U$.
%
%
%
%
%
%
\end{proof}

Considering the sequence functors listed in Example \ref{exam}, Theorem \ref{theo} encompasses several known results as particular cases. But we prefer to focus on some consequences of Theorem \ref{theo} we believe were unknown until now.

To the best of our knowledge, nothing is known about the spaceability of $c_0(X) - \bigcup\limits_{p >0}\ell_p^w(X)$. Since $c_0 - \bigcup\limits_{p > 0}\ell_p \neq \emptyset$, Theorem \ref{theo} gives:

\begin{corollary} Let $X$ be an infinite dimensional Banach space. Then
$$c_0(X) - \textstyle\bigcup\limits_{p >0}\ell_p^w(X)$$
is maximal spaceable.
\end{corollary}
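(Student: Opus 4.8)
The plan is to apply Theorem~\ref{theo} directly, so the only real task is to exhibit two sequence functors whose scalar components are exactly $c_0(\mathbb{K})$ and the family $\{\ell_p^w(\mathbb{K})\}_{p>0}$, and to verify the non-emptiness hypothesis. For the first functor I would take $E$ to be the correspondence $X \in \mathcal{B} \mapsto (c_0(X), \|\cdot\|_\infty)$, which Example~\ref{exam}(a) already certifies to be a sequence functor with $\Gamma_E$ a singleton and $E(\mathbb{K}) = c_0$. For the second functor I would take $F$ to be the correspondence $(p,X) \in (0,\infty) \times \mathcal{B} \mapsto (\ell_p^w(X), \|\cdot\|_{w,p})$, which Example~\ref{exam}(b) certifies to be a sequence functor with $\Gamma_F = (0,\infty)$ and $F(p,\mathbb{K}) = \ell_p$ for every $p > 0$ (since weak and strong $p$-summability coincide for scalar sequences).

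Next I would check the hypothesis of Theorem~\ref{theo}: one needs $\bigcap_{p \in \Gamma_E} E(p,\mathbb{K}) - \bigcup_{q \in \Gamma_F} F(q,\mathbb{K}) \neq \emptyset$, which in the present case reads $c_0 - \bigcup_{q > 0} \ell_q \neq \emptyset$. This is the classical and elementary fact that there exists a scalar sequence converging to $0$ that fails to be $q$-summable for every $q > 0$; the standard witness is $a_n = 1/\log(n+1)$ (or $1/\log\log$-type sequences), for which $a_n \to 0$ but $\sum_n a_n^q = \infty$ for all $q > 0$. I would state this witness explicitly and note that $a \neq 0$, which is all that is needed.

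With both functors in hand and the hypothesis verified, Theorem~\ref{theo} immediately yields that $\bigcap_{p \in \Gamma_E} E(p,X) - \bigcup_{q \in \Gamma_F} F(q,X) = c_0(X) - \bigcup_{q > 0} \ell_q^w(X)$ is maximal spaceable in $E(p,X) = c_0(X)$ (the intersection over the singleton $\Gamma_E$ being just $c_0(X)$ itself), which is exactly the assertion of the corollary. There is essentially no obstacle here: the entire content has been front-loaded into Theorem~\ref{theo} and the examples, so the proof is a two-line invocation plus the exhibition of the separating scalar sequence. The only point requiring the tiniest care is making sure the reader sees that $F(q,\mathbb{K}) = \ell_q$ rather than something strictly larger, but this is standard and the excerpt's Example~\ref{exam}(b) phrasing already supports it implicitly.

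\begin{proof}
Consider the sequence functor $E \colon X \in \mathcal{B} \mapsto (c_0(X), \|\cdot\|_\infty)$ from Example~\ref{exam}(a) (here $\Gamma_E$ is a singleton, so $\bigcap_{p \in \Gamma_E} E(p,X) = c_0(X)$ and $E(\mathbb{K}) = c_0$), and the sequence functor $F \colon (q,X) \in (0,\infty) \times \mathcal{B} \mapsto (\ell_q^w(X), \|\cdot\|_{w,q})$ from Example~\ref{exam}(b), for which $\Gamma_F = (0,\infty)$ and $F(q,\mathbb{K}) = \ell_q$ for every $q > 0$, since for scalar sequences weak and absolute $q$-summability coincide. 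The sequence $a = (a_n)_{n=1}^\infty$ given by $a_n = 1/\log(n+1)$ satisfies $a_n \to 0$, hence $a \in c_0$, while $\sum_{n=1}^\infty a_n^q = \infty$ for every $q > 0$, hence $a \notin \bigcup_{q>0}\ell_q$. Therefore
$$\textstyle\bigcap\limits_{p \in \Gamma_E} E(p,\mathbb{K}) - \bigcup\limits_{q \in \Gamma_F} F(q,\mathbb{K}) = c_0 - \bigcup\limits_{q>0}\ell_q \neq \emptyset,$$
and Theorem~\ref{theo} yields that
$$\textstyle\bigcap\limits_{p \in \Gamma_E} E(p,X) - \bigcup\limits_{q \in \Gamma_F} F(q,X) = c_0(X) - \bigcup\limits_{q>0}\ell_q^w(X)$$
is maximal spaceable in $E(X) = c_0(X)$.
\end{proof}
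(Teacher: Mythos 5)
Your proposal is correct and follows exactly the paper's route: the paper also deduces the corollary by invoking Theorem \ref{theo} with the functors $c_0(\cdot)$ and $\ell_q^w(\cdot)$ from Example \ref{exam} together with the observation that $c_0 - \bigcup_{q>0}\ell_q \neq \emptyset$. Your explicit witness $a_n = 1/\log(n+1)$ and the remark that $\ell_q^w(\mathbb{K}) = \ell_q$ are correct and merely make explicit what the paper leaves implicit.
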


As far as we know, it is unknown whether or not $\ell_p^w(X) - \ell_q^w(X)$ and $\ell_p(X) - \ell_q^w(X)$ are $\mathfrak{c}$-spaceable for $0 < q < p < \infty$. The known techniques, to the best of our
knowledge, are useless in this context. Observe that, as $\ell_p(X)$ is not closed in $\ell_p^w(X)$ for any infinite-dimensional Banach space $X$, the spaceability of $\ell_p^w(X) - \ell_q^w(X)$ is not a straightforward consequence of the spaceability of $\ell_p(X) - \ell_q^w(X)$. Since $\ell_p(\mathbb{K}) - \bigcup\limits_{0<q<p}\ell_q^w(\mathbb{K})  =\ell_p^u(\mathbb{K}) - \bigcup\limits_{0<q<p}\ell_q^w(\mathbb{K})  = \ell_p - \bigcup\limits_{0<q<p}\ell_q \neq \emptyset$, now we know much more:

\begin{corollary}\label{cor} Let $0 < p < \infty$ and $X$ be an infinite dimensional Banach space. Then
$$\ell_p(X) - \textstyle\bigcup\limits_{0 < q <p}\ell_q^w(X){~\rm ~~and~~}\ell_p^u(X) - \textstyle\bigcup\limits_{0 < q <p}\ell_q^w(X)$$
are maximal spaceable. In particular, $\ell_p^w(X) - \textstyle\bigcup\limits_{0 < q <p}\ell_q^w(X)$ is maximal-spaceable.
\end{corollary}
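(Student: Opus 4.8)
The plan is to obtain the statement directly from Theorem \ref{theo}, specialized to suitable restrictions of the sequence functors of Example \ref{exam}(b). Fix $0 < p < \infty$. For the first assertion I would take $\mathcal{E}$ to be the singleton-index sequence functor $X \in {\cal B} \mapsto (\ell_p(X),\|\cdot\|_p)$ and $\mathcal{F}$ to be the sequence functor with $\Gamma_{\mathcal{F}} = (0,p)$ given by $(q,X) \mapsto (\ell_q^w(X),\|\cdot\|_{w,q})$. Both are sequence functors: $\mathcal{E}$ and $\mathcal{F}$ are the restrictions to $\{p\}$ and to $(0,p)$, respectively, of the functors $E$ and $F$ of Example \ref{exam}(b), and conditions \ref{def}(a)--(c) are trivially inherited when one shrinks the index set. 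With these choices $\bigcap_{r \in \Gamma_{\mathcal{E}}}\mathcal{E}(r,Y) = \ell_p(Y)$ and $\bigcup_{q \in \Gamma_{\mathcal{F}}}\mathcal{F}(q,Y) = \bigcup_{0<q<p}\ell_q^w(Y)$ for every Banach space $Y$, in particular for $Y = \mathbb{K}$.

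Next I would check the hypothesis of Theorem \ref{theo}. Since $\ell_q^w(\mathbb{K}) = \ell_q$ for every $q$, that hypothesis becomes $\ell_p - \bigcup_{0<q<p}\ell_q \neq \emptyset$, a classical fact: for instance $a = (a_n)_{n=1}^\infty$ with $a_n = \left(n(\log(n+1))^2\right)^{-1/p}$ belongs to $\ell_p$ but to no $\ell_q$ with $q<p$, because $\sum_n n^{-q/p}(\log(n+1))^{-2q/p}$ diverges whenever $q/p<1$. Theorem \ref{theo} then gives that $\ell_p(X) - \bigcup_{0<q<p}\ell_q^w(X)$ is maximal spaceable in $\mathcal{E}(p,X) = \ell_p(X)$, which is the first claim. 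For the second claim I would keep $\mathcal{F}$ and replace $\mathcal{E}$ by the singleton-index functor $X \mapsto (\ell_p^u(X),\|\cdot\|_{w,p})$, i.e.\ the restriction to $\{p\}$ of the functor $G$ of Example \ref{exam}(b); since for scalars unconditional $p$-summability coincides with absolute $p$-summability, $\ell_p^u(\mathbb{K}) = \ell_p$, so the hypothesis is again the fact just recalled, and Theorem \ref{theo} yields maximal spaceability of $\ell_p^u(X) - \bigcup_{0<q<p}\ell_q^w(X)$ in $\ell_p^u(X)$.

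Finally, for the ``in particular'' statement I would transfer the $\ell_p^u$ result into $\ell_p^w$. Let $U \subseteq \ell_p^u(X)$ be the closed $\dim X$-dimensional subspace produced above, so $U\setminus\{0\} \subseteq \ell_p^u(X) - \bigcup_{0<q<p}\ell_q^w(X)$. As $\ell_p^u(X)$ is, by definition, a closed subspace of $\ell_p^w(X)$, the set $U$ is also closed in $\ell_p^w(X)$; moreover, exactly as in the proof of Theorem \ref{theo}, $\dim \ell_p^w(X) = \dim X$ (any linear subspace of $X^{\mathbb{N}}$ has dimension at most $\dim X^{\mathbb{N}} = \dim X$, and $U$ already witnesses $\dim X$ from below), so $U$ has maximal dimension in $\ell_p^w(X)$. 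Since $\ell_p^u(X) \subseteq \ell_p^w(X)$ we also have $U\setminus\{0\} \subseteq \ell_p^w(X) - \bigcup_{0<q<p}\ell_q^w(X)$, whence $\ell_p^w(X) - \bigcup_{0<q<p}\ell_q^w(X)$ is maximal spaceable. I do not expect a genuine obstacle here: the corollary is essentially a specialization of Theorem \ref{theo}, and the only things to verify are the (routine) stability of the notion of sequence functor under restriction of the index set, the elementary scalar membership fact $\ell_p\setminus\bigcup_{0<q<p}\ell_q\neq\emptyset$, and the last passage from $\ell_p^u(X)$ to $\ell_p^w(X)$, which rests on $\ell_p^u(X)$ being closed in $\ell_p^w(X)$.
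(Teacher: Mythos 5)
Your proposal is correct and follows essentially the same route as the paper: the paper derives the corollary by noting that $\ell_p(\mathbb{K}) - \bigcup_{0<q<p}\ell_q^w(\mathbb{K}) = \ell_p^u(\mathbb{K}) - \bigcup_{0<q<p}\ell_q^w(\mathbb{K}) = \ell_p - \bigcup_{0<q<p}\ell_q \neq \emptyset$ and applying Theorem \ref{theo} to the (restricted) sequence functors of Example \ref{exam}(b), with the ``in particular'' clause resting, exactly as you argue, on $\ell_p^u(X)$ being closed in $\ell_p^w(X)$ --- which is precisely why the paper proves the $\ell_p^u$ case rather than deducing it from the (non-closed) $\ell_p(X)$ case. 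Your explicit scalar witness and the verification that restricting the index set preserves the sequence-functor axioms are correct details the paper leaves implicit.
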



Recalling that $\ell_p\langle X \rangle \subseteq \ell_p(X)$ for $p \geq 1$ (see \cite[p.\,739]{Bu}), in the case $p > 1$ we also know that:

\begin{corollary}\label{corcohen} Let $1 < p < \infty$ and $X$ be an infinite dimensional Banach space. Then
$$\ell_p\langle X \rangle - \textstyle\bigcup\limits_{0 < q <p}\ell_q^w(X)$$
is maximal spaceable.
\end{corollary}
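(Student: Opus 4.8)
The plan is to deduce Corollary \ref{corcohen} directly from Theorem \ref{theo} by choosing the right pair of sequence functors. For the ``positive'' space I would take the sequence functor
$$(p,X) \in [1,\infty) \times {\cal B} \mapsto E(p,X) = (\ell_p\langle X \rangle, \|\cdot\|_{\ell_p\langle X\rangle})$$
from Example \ref{exam}(c), restricted to a fixed $p>1$ so that $\Gamma_E$ is the singleton $\{p\}$; for the ``negative'' space I would take the sequence functor $F$ from Example \ref{exam}(b) with parameter set $\Gamma_F = (0,p)$, so that $F(q,X) = (\ell_q^w(X),\|\cdot\|_{w,q})$ for each $q \in (0,p)$. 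With these choices, $\bigcap_{r \in \Gamma_E} E(r,X) = \ell_p\langle X\rangle$ and $\bigcup_{q \in \Gamma_F} F(q,X) = \bigcup_{0<q<p}\ell_q^w(X)$, so that the set whose maximal spaceability we want is exactly $\bigcap_{r\in\Gamma_E}E(r,X) - \bigcup_{q\in\Gamma_F}F(q,X)$, spaceable in $E(p,X) = \ell_p\langle X\rangle$.

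The single hypothesis of Theorem \ref{theo} that must be verified is that the scalar-valued version of this difference set is nonempty, i.e. $\bigcap_{r\in\Gamma_E}E(r,\mathbb{K}) - \bigcup_{q\in\Gamma_F}F(q,\mathbb{K}) = \ell_p\langle\mathbb{K}\rangle - \bigcup_{0<q<p}\ell_q^w(\mathbb{K}) \neq \emptyset$. Here I would invoke the identifications already recorded in the excerpt: by \cite[Theorem 1]{Arregui-Blasco} (quoted in Example \ref{exam}(c)) one has $\ell_p\langle\mathbb{K}\rangle = \ell_p$, and the weak and strong $\ell_q$ norms coincide in the scalar case, so $\ell_q^w(\mathbb{K}) = \ell_q$. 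Thus the scalar difference set is simply $\ell_p - \bigcup_{0<q<p}\ell_q$, and the argument already used in the passage preceding Corollary \ref{cor} shows this set is nonempty --- one takes, for instance, a sequence decaying just barely fast enough to be in $\ell_p$ but in no $\ell_q$ with $q<p$, such as $a_n = n^{-1/p}(\log(n+1))^{-1}$ for $p<\infty$ (or, more simply, exploit that $\ell_p \supsetneq \bigcup_{q<p}\ell_q$ is classical). Once this nonemptiness is in hand, Theorem \ref{theo} applies verbatim and yields maximal spaceability of $\ell_p\langle X\rangle - \bigcup_{0<q<p}\ell_q^w(X)$ inside $\ell_p\langle X\rangle$.

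There is essentially no obstacle here beyond bookkeeping: the entire technical content --- the construction of the one-dimensional-per-vector map $T(x) = a\otimes x$, the dimension count, and the closedness of its range --- is carried out once and for all in the proof of Theorem \ref{theo}, and Example \ref{exam}(c) already certifies that $(\ell_p\langle\cdot\rangle,\|\cdot\|_{\ell_p\langle\cdot\rangle})$ satisfies the defining axioms (a), (b), (c1) of a sequence functor. The only place requiring the restriction $p>1$ is the definition of $\ell_p\langle X\rangle$ itself (Cohen-$p$-summability is defined for $1 \le p < \infty$, and one needs $p>1$ so that the conjugate exponent $p'<\infty$ makes the defining supremum over $\|(y_j^*)\|_{\ell_{p'}^w(X')}=1$ meaningful); this restriction is inherited, not imposed anew. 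Hence the proof is a two-line application: \emph{pick the functors $E,F$ above; verify $\ell_p - \bigcup_{q<p}\ell_q \neq\emptyset$; apply Theorem \ref{theo}.}
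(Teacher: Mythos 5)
Your proposal is correct and is essentially the paper's own proof: the paper likewise deduces the corollary from Theorem \ref{theo} using the Cohen sequence functor of Example \ref{exam}(c) together with the weak-$\ell_q$ functor, citing exactly the facts $\ell_p\langle\mathbb{K}\rangle=\ell_p$, $\ell_q^w(\mathbb{K})=\ell_q$ and $\ell_p-\bigcup_{0<q<p}\ell_q\neq\emptyset$. (Only your side remark on why $p>1$ is required is off --- Example \ref{exam}(c) defines $\ell_p\langle X\rangle$ for all $1\le p<\infty$, so the restriction is not forced by the definition --- but this does not affect the argument for the stated range.)
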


\begin{proof} Use that $\ell_p\langle \mathbb{K} \rangle = \ell_p$ \cite[Theorem 1]{Arregui-Blasco}, $\ell_q^w(\mathbb{K}) = \ell_q$ and $\ell_p - \bigcup\limits_{0<q<p}\ell_q \neq \emptyset$.
\end{proof}

The sequence functor $Rad$ of Example \ref{exam}(d) plays a central role in the geometry of Banach spaces. For example, for $1 \leq p \leq 2 \leq q \leq \infty$, a Banach space $X$ has type $p$ if and only if $\ell_p(X) \subseteq Rad(X)$ and has cotype $q$ if and only if $Rad(X) \subseteq \ell_q(X)$ (cf. \cite[Proposition 12.4]{DJT}). Wonder if, in these cases, $Rad(X) - \ell_p(X)$ and $\ell_q(X) - Rad(X)$ are spaceable? For $p \neq 2 \neq q$ we know much more than that:

\begin{corollary}\label{corrad} Let $X$ be an infinite dimensional Banach space. Then $Rad(X) - \bigcup\limits_{0 < p < 2}\ell_p^w(X)$ and $\bigcap\limits_{p >2}\ell_p(X) -Rad(X)$ are maximal spaceable (the latter in the locally convex topology of  $\ell_2^+(X)$).
\end{corollary}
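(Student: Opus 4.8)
The plan is to obtain both assertions of Corollary~\ref{corrad} as direct applications of Theorem~\ref{theo} by exhibiting, in each case, suitable sequence functors $E,F$ on the left-hand and right-hand sides and checking that the scalar-level difference is nonempty. The only real content is the bookkeeping of which functor plays which role; the heavy lifting has already been done in Theorem~\ref{theo} and in Example~\ref{exam}.

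\medskip
\emph{First assertion: $Rad(X) - \bigcup_{0<p<2}\ell_p^w(X)$ is maximal spaceable.}
I would take $E$ to be the singleton sequence functor $X\mapsto (Rad(X),\|\cdot\|_{Rad(X)})$ of Example~\ref{exam}(d), so that $\Gamma_E$ is a point and $\bigcap_{p\in\Gamma_E}E(p,X)=Rad(X)$, and I would take $F$ to be the sequence functor $(q,X)\in(0,\infty)\times{\cal B}\mapsto(\ell_q^w(X),\|\cdot\|_{w,q})$ of Example~\ref{exam}(b), but with index set restricted to $\Gamma_F=(0,2)$, so that $\bigcup_{q\in\Gamma_F}F(q,X)=\bigcup_{0<p<2}\ell_p^w(X)$. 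The scalar hypothesis of Theorem~\ref{theo} then reads $Rad(\mathbb{K})-\bigcup_{0<p<2}\ell_p^w(\mathbb{K})\neq\emptyset$; by Khintchine's inequalities (Example~\ref{exam}(d)) $Rad(\mathbb{K})=\ell_2$ as sets, and $\ell_p^w(\mathbb{K})=\ell_p$, so this is exactly $\ell_2-\bigcup_{0<p<2}\ell_p\neq\emptyset$, which holds (e.g. $(1/(n\log n))_n\in\ell_2$ but in no $\ell_p$ with $p<2$; any sequence in $\ell_2\setminus\ell_p$ for all $p<2$ works). Theorem~\ref{theo} then gives maximal spaceability of $Rad(X)-\bigcup_{0<p<2}\ell_p^w(X)$ inside $Rad(X)=E(p,X)$ for the unique $p\in\Gamma_E$, which is the claim.

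\medskip
\emph{Second assertion: $\bigcap_{p>2}\ell_p(X)-Rad(X)$ is maximal spaceable in $\ell_2^+(X)$.}
Here I would reverse roles: let $E$ be the locally convex sequence functor $(p,X)\in[1,\infty)\times{\cal B}\mapsto(\ell_p^+(X),(\|\cdot\|_q)_{q>p})$ of Example~\ref{exam}(e), evaluated at the single parameter $p=2$, so that $\Gamma_E=\{2\}$ and $\bigcap_{p\in\Gamma_E}E(p,X)=\ell_2^+(X)=\bigcap_{q>2}\ell_q(X)=\bigcap_{p>2}\ell_p(X)$; and let $F$ be the singleton functor $X\mapsto(Rad(X),\|\cdot\|_{Rad(X)})$, so $\bigcup_{q\in\Gamma_F}F(q,X)=Rad(X)$. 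The scalar hypothesis becomes $\ell_2^+(\mathbb{K})-Rad(\mathbb{K})\neq\emptyset$, i.e. $\bigcap_{q>2}\ell_q-\ell_2\neq\emptyset$, which is clear (take $(1/\sqrt{n})_n$, which lies in $\ell_q$ for every $q>2$ but not in $\ell_2$). Theorem~\ref{theo} then yields maximal spaceability of $\bigcap_{p>2}\ell_p(X)-Rad(X)$ inside $E(2,X)=\ell_2^+(X)$ with its Fréchet (locally convex) topology, which is exactly the parenthetical assertion.

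\medskip
\emph{Anticipated obstacle.} There is no deep obstacle once Theorem~\ref{theo} is in hand; the points requiring care are purely formal. One must verify that $Rad$, $\ell_q^w$, and $\ell_2^+$ genuinely satisfy \emph{all} clauses of Definition~\ref{def} --- in particular conditions \ref{def}(c1) for $Rad$ and $\ell_q^w$ (translation-invariant metric estimates via $\|a\otimes x\|=\|x\|_X\|a\|$, using Khintchine for $Rad$) and \ref{def}(c2) for $\ell_2^+(X)$ (coordinate domination by a single continuous seminorm, e.g. $\|\cdot\|_q$ for a fixed $q>2$, since $\|w_j\|_X\le\|w\|_q$). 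These are precisely the verifications recorded in Examples~\ref{exam}(b),(d),(e), so they may be cited rather than reproved. The second subtlety is to restrict the index set $\Gamma_F$ of the $\ell^w$-functor to $(0,2)$ rather than all of $(0,\infty)$; this is legitimate since any non-empty subset of a valid index set again yields a sequence functor (the defining conditions are parameterwise). Having dispatched these, both statements fall out immediately.
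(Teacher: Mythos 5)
Your proposal is correct and follows essentially the same route as the paper: both assertions are obtained by feeding the sequence functors of Example~\ref{exam}(b), (d) and (e) into Theorem~\ref{theo} and reducing to the scalar facts $Rad(\mathbb{K})=\ell_2$, $\ell_p^w(\mathbb{K})=\ell_p$, $\ell_2-\bigcup_{0<p<2}\ell_p\neq\emptyset$ and $\ell_2^+-\ell_2\neq\emptyset$, which is exactly the paper's one-line proof. One small slip: your explicit witness $(1/(n\log n))_n$ for $\ell_2-\bigcup_{0<p<2}\ell_p$ is wrong, since it lies in $\ell_p$ for every $p>1$; a correct witness is $\left(n^{-1/2}(\log(n+1))^{-1}\right)_{n=1}^\infty$, which belongs to $\ell_2$ but to no $\ell_p$ with $p<2$ --- your fallback remark that any element of $\ell_2\setminus\bigcup_{p<2}\ell_p$ works keeps the argument intact.
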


\begin{proof} Use that $Rad(\mathbb{K}) = \ell_2$ as sets (Khintchine's inequalities \cite[1.10]{DJT}), $\ell_p^w(\mathbb{K}) = \ell_p$,  $\ell_2 - \bigcup\limits_{p \in  (0,2)} \ell_p \neq \emptyset $ and $\ell_2^+ - \ell_2 \neq \emptyset$.
\end{proof}

In \cite{cleon} it is proved that, for an infinite-dimensional Banach space $X$, $\ell_p^+(X) - \ell_p(X)$ is maximal spaceable in the locally convex topology of $\ell_p^+(X)$. Since $\ell_p^+(\mathbb{K}) - \ell_p^w(\mathbb{K}) = \ell_p^+ - \ell_p \neq \emptyset$, now we know a bit more:

\begin{corollary} Let $1 \leq p < \infty$ and $X$ be an infinite dimensional Banach space. Then $\ell_p^+( X) -\ell_p^w(X)$ is maximal spaceable in the locally convex topology of $\ell_p^+(X)$.
\end{corollary}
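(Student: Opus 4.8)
The plan is to deduce this corollary directly from Theorem \ref{theo} by choosing the right pair of sequence functors. For the first space, take $E$ to be the sequence functor $X \mapsto (\ell_p^+(X), (\|\cdot\|_q)_{q>p})$ from Example \ref{exam}(e), which is indeed a sequence functor as noted there, and take $F$ to be the sequence functor $(q,X) \in (0,\infty) \times {\cal B} \mapsto (\ell_q^w(X), \|\cdot\|_{w,q})$ from Example \ref{exam}(b). Here $\Gamma_E$ is a singleton (suppressed from the notation) and we need only the single value of the second functor at the fixed parameter $p$, i.e. we apply Theorem \ref{theo} with $\Gamma_F$ taken to be $\{p\}$ (or the whole of $(0,\infty)$ restricted as convenient); in any case $\bigcup_{q \in \Gamma_F} F(q,\mathbb{K})$ should be read as $\ell_p^w(\mathbb{K}) = \ell_p$.

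The one hypothesis of Theorem \ref{theo} to verify is the nonemptiness of the scalar difference set $\bigcap_{r \in \Gamma_E} E(r,\mathbb{K}) - \bigcup_{q \in \Gamma_F} F(q,\mathbb{K})$, which in this instance is $\ell_p^+(\mathbb{K}) - \ell_p^w(\mathbb{K})$. Since $\ell_p^w(\mathbb{K}) = \ell_p$ (weakly $p$-summable scalar sequences are exactly $p$-summable scalar sequences, as $\mathbb{K}' = \mathbb{K}$) and, by Example \ref{exam}(e), $\ell_p^+(\mathbb{K}) = \bigcap_{q>p}\ell_q$ is the Metafune--Moscatelli space $l^{p+}$, this difference set is $l^{p+} \setminus \ell_p$, which is nonempty: a standard witness is the sequence $a = (a_n)$ with $a_n = n^{-1/p}(\log(n+1))^{-1}$ for $n \geq 1$, which lies in $\ell_q$ for every $q > p$ but fails to lie in $\ell_p$ because $\sum_n n^{-1}(\log(n+1))^{-p}$ diverges for $p \leq 1$, and more simply one takes $a_n = n^{-1/p}$ when $p>1$, or any sequence known to separate $\ell_p$ from all larger $\ell_q$. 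Either way, $\ell_p^+ - \ell_p \neq \emptyset$, exactly as recorded in the sentence preceding the statement.

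With that hypothesis in hand, Theorem \ref{theo} immediately yields that $\bigcap_{r \in \Gamma_E} E(r,X) - \bigcup_{q \in \Gamma_F} F(q,X) = \ell_p^+(X) - \ell_p^w(X)$ is maximal spaceable in $E(p,X) = \ell_p^+(X)$, and the topology referred to is the locally convex topology of $\ell_p^+(X)$ generated by $(\|\cdot\|_q)_{q>p}$, since that is precisely the topology the functor $E$ carries. This is exactly the assertion of the corollary. I do not expect any genuine obstacle: the content is entirely in Theorem \ref{theo} and in the already-established fact that $\ell_p^+(X)$ is a sequence functor; the only thing to be careful about is bookkeeping — making sure that $\ell_p^w(\mathbb{K})=\ell_p$ and $\ell_p^+(\mathbb{K})=l^{p+}$ are correctly identified so that the nonemptiness statement $\ell_p^+(\mathbb{K}) - \ell_p^w(\mathbb{K}) = \ell_p^+ - \ell_p \neq \emptyset$ is literally what the hypothesis of Theorem \ref{theo} demands.
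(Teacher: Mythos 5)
Your proposal is correct and is essentially the paper's own argument: the paper proves this corollary simply by invoking Theorem \ref{theo} for the sequence functors $\ell_p^+(\cdot)$ of Example \ref{exam}(e) and $\ell_q^w(\cdot)$ of Example \ref{exam}(b), after noting $\ell_p^+(\mathbb{K}) - \ell_p^w(\mathbb{K}) = \ell_p^+ - \ell_p \neq \emptyset$. Your extra care with an explicit witness (note that $a_n = n^{-1/p}$ in fact works for every $p \geq 1$, including $p=1$) only makes explicit what the paper takes for granted.
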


Several results about Lorentz sequence spaces can be stated. We give just one example. As $\ell_{r_2,q_2} - \ell_{r_1,q_1} \neq \emptyset$ whenever $0< r_1 < r_2 < \infty$ (see \cite[2.1.12]{livropietsch}), we have:

\begin{corollary} Let $0< r_1 < r_2 < \infty$. Then
$\ell_{r_2,q_2}(X) - \ell_{r_1,q_1}(X) $
is maximal spaceable for all $0 < q_1, q_2 < \infty$ and every Banach space $X$.
\end{corollary}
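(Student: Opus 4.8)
The plan is to obtain the last corollary as a direct instance of Theorem~\ref{theo}, so the work is essentially bookkeeping: identify the two sequence functors, verify the nonemptiness hypothesis on the scalar level, and then read off the conclusion. First I would take $E$ to be the Lorentz sequence functor of Example~\ref{exam}(f) with parameter frozen at $(r_2,q_2)$, i.e.\ $E(X) = \bigl(\ell_{(r_2,q_2)}(X), \|\cdot\|_{\ell_{(r_2,q_2)}(X)}\bigr)$, and $F$ the same functor frozen at $(r_1,q_1)$, i.e.\ $F(X) = \bigl(\ell_{(r_1,q_1)}(X), \|\cdot\|_{\ell_{(r_1,q_1)}(X)}\bigr)$. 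Both are sequence functors by Example~\ref{exam}(f) --- in particular each is a complete quasi-normed (hence metrizable) space, so alternative \ref{def}(a1) applies, and condition \ref{def}(c1) holds with $L = M = 1$ thanks to the identity $\|a\otimes y\|_{\ell_{(r,q)}(X)} = \|y\|_X\cdot\|a\|_{\ell_{(r,q)}(\mathbb{K})}$ recorded there. Since $\Gamma_E$ and $\Gamma_F$ are singletons in this framing, the intersections and unions in Theorem~\ref{theo} collapse, and the statement we must feed in is simply $\ell_{(r_2,q_2)}(\mathbb{K}) - \ell_{(r_1,q_1)}(\mathbb{K}) \neq \emptyset$.

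Next I would supply that nonemptiness. The scalar Lorentz spaces are $\ell_{(r_i,q_i)}(\mathbb{K}) = \ell_{r_i,q_i}$, the classical Lorentz sequence spaces, and the strict inclusion $\ell_{r_1,q_1} \subsetneq \ell_{r_2,q_2}$ for $0 < r_1 < r_2 < \infty$ (valid for any choice of the secondary indices $q_1,q_2$) is standard; the excerpt already cites \cite[2.1.12]{livropietsch} for $\ell_{r_2,q_2} - \ell_{r_1,q_1} \neq \emptyset$. A concrete witness, if one wants to be explicit, is a sequence whose nonincreasing rearrangement decays like $n^{-1/r}$ for some $r$ with $r_1 < r < r_2$ (tweaked by a logarithmic factor if needed to handle the $q$-indices at the endpoint), which lies in $\ell_{r_2,q_2}$ but not in $\ell_{r_1,q_1}$. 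So the hypothesis of Theorem~\ref{theo} is met.

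Finally, applying Theorem~\ref{theo} with this $E$, $F$, and the given infinite-dimensional Banach space $X$, we conclude that $\ell_{(r_2,q_2)}(X) - \ell_{(r_1,q_1)}(X)$ is maximal spaceable in $E(X) = \ell_{(r_2,q_2)}(X)$, which is exactly the assertion of the corollary. There is essentially no obstacle here of a conceptual nature: the only point requiring a moment's care is confirming that the norm identity and completeness in Example~\ref{exam}(f) genuinely verify all of conditions \ref{def}(a)--(c) for \emph{both} the numerator and the denominator functor uniformly in the parameters, and that the quoted inclusion $\ell_{r_2,q_2}-\ell_{r_1,q_1}\neq\emptyset$ covers all admissible $q_1,q_2$ (it does, since the primary index $r$ dominates the inclusion behaviour). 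Given that, the proof is a one-line invocation:

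\begin{proof}
Let $E$ and $F$ be the Lorentz sequence functors of Example~\ref{exam}(f) with parameters $(r_2,q_2)$ and $(r_1,q_1)$, respectively, both regarded as functors on ${\cal B}$ (so $\Gamma_E$ and $\Gamma_F$ are singletons). Since $\ell_{(r,q)}(\mathbb{K}) = \ell_{r,q}$ and $0 < r_1 < r_2 < \infty$, we have $\ell_{(r_2,q_2)}(\mathbb{K}) - \ell_{(r_1,q_1)}(\mathbb{K}) = \ell_{r_2,q_2} - \ell_{r_1,q_1} \neq \emptyset$ by \cite[2.1.12]{livropietsch}. Theorem~\ref{theo} then yields that $\ell_{(r_2,q_2)}(X) - \ell_{(r_1,q_1)}(X)$ is maximal spaceable in $\ell_{(r_2,q_2)}(X)$.
\end{proof}
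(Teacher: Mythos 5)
Your proof is correct and matches the paper's intended argument exactly: the corollary is presented as an immediate consequence of Theorem~\ref{theo} applied to the Lorentz sequence functors of Example~\ref{exam}(f), with the scalar nonemptiness $\ell_{r_2,q_2}-\ell_{r_1,q_1}\neq\emptyset$ taken from \cite[2.1.12]{livropietsch}, which is precisely what you do. The only caveat --- inherited from the paper's own phrasing rather than introduced by you --- is that Theorem~\ref{theo} assumes $X$ infinite dimensional, so the words ``every Banach space $X$'' in the statement should be read with that restriction, as your final paragraph implicitly does.
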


\begin{remark}\label{remark}\rm An examination of the proof of Theorem \ref{theo} makes clear that the argument works in spaces other than sequence spaces. Just to illustrate, let us take a tensorial approach in the normed case. By $\alpha$ we denote a reasonable crossnorm (see \cite[Chapter 6]{ryan}). Given Banach spaces $X$ and $Y$, by $X \otimes_\alpha Y$ we mean the normed space $(X \otimes Y, \alpha)$. According to \cite[Proposition 6.1]{ryan}, $\alpha(x \otimes y) = \|x\|_X \cdot \|y\|_Y$ for all $x \in X$ and $y \in Y$. Reasoning as in the proof of Theorem \ref{theo}, we get:

\medskip

{\it Let $X$ and $Y$ be Banach spaces and $A$ be a proper subset of $Y$. Then $Y \otimes_\alpha X - A \underline{\otimes} X$ is $\dim X$-spaceable in $Y \otimes_\alpha X$, where $A \underline{\otimes} X = \{a \otimes x : a \in A {\rm ~and~}x \in X\}$.}

\medskip

This tensorial approach recovers some of the normed cases we studied before. For $p \geq 1$: $c_0(X) = c_0 \widehat\otimes_\varepsilon X$ \cite[Theorem 1.1.11]{resume} and $\ell_p^u(X) = \ell_p \widehat\otimes_\varepsilon X$ \cite[Proposition 3]{Bu}, where $\varepsilon$ is the injective norm; $\ell_p\langle X \rangle = \ell_p \widehat\otimes_\pi X$ \cite[Theorem 1]{Bu-Diestel}, where $\pi$ is the projective norm; $\ell_p(X) = \ell_p \widehat\otimes_{\Delta_p}X$ \cite[8.1]{df}, where $\Delta_p$ is the reasonable crossnorm studied in \cite[Section 7]{df}.
\end{remark}

\bigskip

\noindent [Geraldo Botelho, Daniel Cariello, Vin\'icius V. F\'avaro] Faculdade de Matem\'atica, Universidade Federal de Uberl\^andia, 38.400-902 -- Uberl\^andia -- Brazil, e-mails: botelho@ufu.br, dcariello@famat.ufu.br,
vvfavaro@gmail.com.

\medskip

\noindent [Daniel Pellegrino] Departamento de Matem\'atica,
Universidade Federal da Para\'iba, 58.051-900 - Jo\~ao Pessoa,
Brazil, e-mail: dmpellegrino@gmail.com.

\end{document}